\theoremstyle{plain}
\newtheorem{thm}{Theorem}[section]
\newtheorem{lem}[thm]{Lemma}
\newtheorem{dfn}[thm]{Definition}
\newtheorem{prop}[thm]{Proposition}
\newtheorem{rmk}[thm]{Remark}
\def\C{\mathscr{C}}
\def\D{\mathrm{D}}
\def\G{\mathscr{G}}
\def\M{\mathscr{M}}
\def\N{\mathscr{N}}
\def\T{\mathrm{T}}
\def\d{\mathrm{d}}
\def\h{\mathrm{h}}
\def\r{\mathrm{r}}
\def\s{\mathrm{s}}
\def\u{\mathrm{u}}
\def\Cset{\mathbb{C}}
\def\Kset{\mathbb{K}}
\def\Nset{\mathbb{N}}
\def\Rset{\mathbb{R}}
\def\Sset{\mathbb{S}}
\def\id{\mathrm{id}}
\def\epsilon{\varepsilon}
\DeclareMathOperator{\sech}{sech}
\def\theequation{\arabic{section}.\arabic{equation}}
\begin{document}


\title[Nonintegrability of perturbations of integrable system]%
{Nonintegrability of time-periodic perturbations
 of analytically integrable systems near homo- and heteroclinic orbits}

\author{Kazuyuki Yagasaki}

\address{Department of Applied Mathematics and Physics, Graduate School of Informatics,
Kyoto University, Yoshida-Honmachi, Sakyo-ku, Kyoto 606-8501, JAPAN}
\email{yagasaki@amp.i.kyoto-u.ac.jp}

\date{\today}
\subjclass[2020]{37J30; 34D10; 70K44; 37C29; 34C37; 70E20}
\keywords{Nonintegrability; time-periodic perturbation; conservative system;
 heteroclinic orbit; Morales-Ramis theory; Melnikov method; rigid body.}

\begin{abstract}
We consider time-periodic perturbations of analytically integrable systems
 in the sense of Bogoyavlenskij
 and study their \emph{real-meromorphic} nonintegrability,
 using a generalized version due to Ayoul and Zung of the Morales-Ramis theory.
The perturbation terms are assumed to have finite Fourier series in time,
 and the perturbed systems are rewritten as higher-dimensional autonomous systems
 having the small parameter as a state variable.
We show that if the Melnikov functions are not constant,
 then the autonomous systems are not \emph{real-meromorphically} integrable
 near homo- and heteroclinic orbits.
We illustrate the theory for rotational motions of a periodically forced rigid body,
 which provides a mathematical model of a quadrotor helicopter.
\end{abstract}
\maketitle


\section{Introduction}

In this paper we study the nonintegrability of systems of the form
\begin{equation}
\dot{x}=f(x)+\epsilon g(x,\nu t),\quad
x\in\Rset^n,
\label{eqn:syse}
\end{equation}
where $\epsilon$ is a small parameter $0<\epsilon\ll 1$, $n\ge 3$ is an integer,
 $\nu>0$ is a constant,
 $f:\Rset^n\to\Rset^n$ and $g:\Rset^n\times\Rset\to\Rset^n$ are analytic
 and $g(x,\theta)$ is $2\pi$-periodic in $\theta$.
We adopt the following definition of integrability in the Bogoyavlenskij sense \cite{B98}.
 
\begin{dfn}[Bogoyavlenskij]
\label{dfn:1a}
For $n,q\in\Nset$ such that $1\le q\le n$,
 an $n$-dimensional dynamical system
\begin{equation}
\dot{x}=w(x),\quad z\in\Rset^n\text{ or }\Cset^n,
\label{eqn:gsys}
\end{equation}
is called \emph{$(q,n-q)$-integrable} or simply \emph{integrable} 
 if there exist $q$ vector fields $w_1(x)(:=w(x)),w_2(x),\dots,w_q(x)$
 and $n-q$ scalar-valued functions $F_1(x),\dots,\linebreak F_{n-q}(x)$ such that
 the following two conditions hold:
\begin{enumerate}
\setlength{\leftskip}{-1.8em}
\item[\rm(i)]
$w_1(x),\dots,w_q(x)$ are linearly independent almost everywhere $($a.e.$)$
 and commute with each other,
 i.e., $[w_j,w_k](x):=\D w_k(x)w_j(x)-\D w_j(x)w_k(x)\equiv 0$ for $j,k=1,\ldots,q$,
 where $[\cdot,\cdot]$ denotes the Lie bracket$;$
\item[\rm(ii)]
The derivatives $\D F_1(x),\dots, \D F_{n-q}(x)$ are linearly independent a.e.
 and $F_1(x),\linebreak\dots,F_{n-q}(x)$ are first integrals of $w_1, \dots,w_q$,
 i.e., $\D F_k(x)\cdot w_j(x)\equiv 0$ for $j=1,\ldots,q$ and $k=1,\ldots,n-q$,
 where ``$\cdot$'' represents the inner product.
\end{enumerate}
We say that the system \eqref{eqn:gsys}
 is \emph{analytically} $($resp. \emph{meromorphically}$)$ \emph{integrable}
 if the first integrals and commutative vector fields are analytic $($resp. meromorphic$)$. 
\end{dfn}
Definition~\ref{dfn:1a} is considered as a generalization of 
 Liouville-integrability for Hamiltonian systems \cite{A89,M99}
 since an $n$-degree-of-freedom Liouville-integrable Hamiltonian system
 with $n\ge 1$
 has not only $n$ functionally independent first integrals
 but also $n$ linearly independent commutative (Hamiltonian) vector fields
 generated by the first integrals.

When $\epsilon=0$, Eq.~\eqref{eqn:csys} becomes
\begin{equation}
\dot{x}=f(x).
\label{eqn:sys0}
\end{equation}
We make the following assumptions on the unperturbed system \eqref{eqn:sys0}:

\begin{figure}[t]
\includegraphics[scale=0.9]{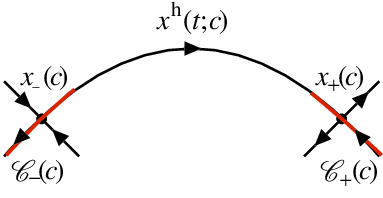}
\caption{Assumptions~(A2), (A4) and (A5).}
\label{fig:1a}
\end{figure}

\begin{enumerate}
\setlength{\leftskip}{-1.2em}
\item[(A1)]
There exist $m$ analytic first integrals $F_j:\Rset^n\to\Rset$, $j=1,\ldots,m$,
 where $1\le m\le n-2$.
\item[(A2)]
There exists a closed domain $I\subset\Rset^m$ with a nonempty interior such that
 for any $c\in I$ Eq.~\eqref{eqn:sys0}
 has two equilibria of nonhyperbolic saddle type, $x_\pm(c)$,
 at which $\D F_j(x)$, $j=1,\ldots,m$, are linearly independent
 on the level set $F(x)=(F_1(x),\ldots,F_m(x))=c$.
Moreover, $x_\pm(c)$ are analytic with respect to $c\in I^0=I\setminus\partial I$,
 and $\D F_1(x),\dots, \D F_m(x)$ are linearly independent near them.
\item[(A3)]
The system \eqref{eqn:syse} is analytically integrable, i.e.,
 it has $q$ commutative vector fields and $n-q$ first integrals
 satisfying conditions~(i) and (ii) in Definition~\ref{dfn:1a},
 where $n-q\ge m$ and $m$ of the first integrals are $F_j(x)$, $j=1,\ldots,m$.
Moreover, the derivatives of the additional $n-q-m$ first integrals
 are linearly dependent of $\D F_j(x)$, $j=1,\ldots,m$, at $x_\pm(c)$.
\end{enumerate}
Thus, the system \eqref{eqn:syse} represents a time-periodic perturbation
 of the analytic $(q,n-q)$-integrable system \eqref{eqn:sys0}.
\begin{enumerate}
\setlength{\leftskip}{-1.2em}
\item[(A4)]
$n_\s$ and $n_\u$ eigenvalues of $\D f(x_\pm(c))$ have negative and positive real parts,
 respectively, where $n_\s+n_\u+m=n$.
\end{enumerate}
By (A2), the zero eigenvalue of $\D f(x_\pm(c))$ is of geometric multiplicity $m$
 (see Section~1 of \cite{Y18}).
Moreover, by (A2)-(A4),
 the derivatives of the $n-q$ first integrals are not linearly independent at $x=x_\pm(c)$
 since the multiplicity of the zero eigenvalue of $\D f(x_\pm(c))$ is more than $m$ if not.

\begin{enumerate}
\setlength{\leftskip}{-1.2em}
\item[(A5)]
The two equilibria $x_\pm(c)$ are connected by a heteroclinic orbit $x^\h(t;c)$ satisfying
\[
\lim_{t\to\pm\infty}x^\h(t;c)=x_\pm(c)
\]
on each level set $F(x)=c$ for $c\in I$.
Moreover, $x^\h(t;c)$ is analytic in $c\in I^0$ as well as in $t\in\Rset$.
\item[(A6)]
For each $c\in I$
 there exists one-dimensional analytic, locally invariant manifolds with boundaries,
 $\C_\pm(c)\subset F^{-1}(c)$,
 such that $x_\pm(c)\in\C_\pm(c)\setminus\partial\C_\pm(c)$
 and $x^\h(t;c)\in\C_\pm(c)$ for $|t|\ge T$ with $T>0$ sufficiently large.
See Fig.~\ref{fig:1a}.
\end{enumerate}
Here ``local invariance'' of $\C_\pm(c)$ means
 that a trajectory $x(t)$ may stay in $\C_\pm(c)$ only for $t\in(t_1,t_2)$
 if it starts in $\C_\pm(c)$ at $t=0$, where $t_1<0<t_2$.
Thus, some trajectories starting in $\C_\pm(c)$
  may  escape $\C_\pm(c)$ through $\partial\C_\pm(c)$.
See \cite{E13,F72,F74,F77,W94} for details on this concept and its related matters.

The variational equation (VE) of \eqref{eqn:sys0}
 along the heteroclinic orbit $x^\h(t;c)$ is given by
\begin{equation}
\dot{\xi}=\D f(x^\h(t;c))\xi,\quad
\xi\in\Rset^n.
\label{eqn:ve}
\end{equation}
We easily see that $\xi=\dot{x}^\h(t;c)$ is a solution to \eqref{eqn:ve}
 which exponentially tends to $0$ as $t\to\pm\infty$.
By a fundamental result concerning asymptotic behavior of linear differential equations
 (e.g., Section~3.8 of \cite{CL55}),
 assumption~(A4) means that there exist $n_\s$ independent solutions to \eqref{eqn:ve}
 which exponentially tend to $0$ as $t\to\infty$
 and $n_\u$ independent solutions to \eqref{eqn:ve}
 which exponentially tend to $0$ as $t\to-\infty$ for each $c\in I$.
We assume the following.
\begin{enumerate}
\setlength{\leftskip}{-1.2em}
\item[(A7)]
The variational equation \eqref{eqn:ve} has no other independent solution
 than $\xi=\dot{x}^\h(t;c)$ such that it exponentially tends to $0$ as $t\to\pm\infty$.
\end{enumerate}
Heteroclinic transition motions occurring in systems satisfying (A1), (A2) and (A4)-(A7)
 were discussed in \cite{Y18} previously.
In (A2) and (A4)-(A7), it is allowed that $x_+(c)=x_-(c)$
 and $x^\h(t;c)$ becomes a homoclinic orbit to the equilibrium.

We rewrite \eqref{eqn:syse} as an autonomous system
\begin{equation}
\dot{x}=f(x)+\epsilon g(x,\theta),\quad
\dot{\theta}=\nu,\quad
(x,\theta)\in\Rset^n\times\Sset^1.
\label{eqn:asys}
\end{equation}
When $\epsilon=0$,
 there exist two $(m+1)$-dimensional normally hyperbolic invariant manifolds
 with boundaries,
\[
\M_0^\pm=\{(x_\pm(c),\theta)\mid\theta\in\Sset^1,c\in I\},
\]
which consist of nonhyperbolic saddles.
Here ``normal hyperbolicity'' means that
 the expansion and contraction rates of the flow
 generated by \eqref{eqn:asys} with $\epsilon=0$
 normal to $\M_0^\pm$ dominate those tangent to $\M_0^\pm$.
See \cite{E13,F72,F74,F77,W94} for the details on this concept and its related matters.
The invariant manifolds $\M_0^\pm$
 have $(m+n_\s+1)$- and $(m+n_\u+1)$-dimensional stable and unstable manifolds,
 $W^\s(\M_0^\pm)$ and $W^\u(\M_0^\pm)$,
 and $W^\s(\M_0^+)$ and $W^\u(\M_0^-)$
 intersect in the $(m+2)$-dimensional heteroclinic manifold
\[
\N=\{(x^\h(t;c),\theta)\mid\theta\in\Sset^1,t\in\Rset,c\in I\},
\]
i.e., $W^\s(\M_0^+)\cap W^\u(\M_0^-)\supset\N$.
When $\epsilon\neq 0$,
 there exist two $(m+1)$-dimensional normally hyperbolic,
 locally invariant manifolds with boundaries, $\M_\epsilon^\pm$, near $\M_0^\pm$,
 which have $(m+n_\s+1)$- and $(m+n_\u+1)$-dimensional stable and unstable manifolds,
 $W^\s(\M_\epsilon^\pm)$ and $W^\u(\M_\epsilon^\pm)$,
 near $W^\s(\M_0^\pm)$ and $W^\u(\M_0^\pm)$.
See Proposition~1 of \cite{Y18}.
  
We extend the domains of the independent and state variables, $t$ and $x$,
 in \eqref{eqn:syse} to those containing $\Rset$ and $\Rset^n$
 in $\Cset$ and $\Cset^n$, respectively.
The adjoint equation for the VE \eqref{eqn:ve} is given by 
\begin{equation}
\dot{\xi}=-\D f(x^\h(t;c))^\T\xi,
\label{eqn:ave}
\end{equation}
which we call the \emph{adjoint variational equation} (AVE)
 of \eqref{eqn:sys0} along $x^\h(t;c)$.
Let $\psi_2(x;c)$ be a bounded solution to the AVE \eqref{eqn:ave}
 such that $\psi_2(x;c)$ exponentially tends to zero as $t\to\pm\infty$.
It follows from assumption (A7) that $\psi_2(x;c)$ exists
 and Eq.~\eqref{eqn:ave} has no such solution
 that is linearly independent of $\psi_2(x;c)$ for $c\in I$ fixed
 (see Section~2.2).
We define the \emph{Melnikov function} as
\begin{equation}
M(\theta;c)=\int_{-\infty}^\infty\psi_2(t;c)\cdot g(x^\h(t;c),\nu t+\theta)\d t.
\label{eqn:M}
\end{equation}
In \cite{Y18} it was shown  that
 if $M(\theta;c)$ has a simple zero at $\theta=\theta_0$ for some $c\in I$,
 then for $|\epsilon|>0$ sufficiently small
 $W^\s(\M_\epsilon^+)$ and $W^\u(\M_\epsilon^-)$ intersect transversely.
Moreover, when the locally invariant manifolds $\M_\epsilon^\pm$
 consist of periodic orbits, heterocliinic transition motions
 resulting from transverse intersection
 between the $(n_\s+1)$- and $(n_\u+1)$-dimensional stable and unstable manifolds
 of the periodic orbits were described there.
 
We now state our main result.
We additionally assume the following:
\begin{enumerate}
\setlength{\leftskip}{-1em}
\item[(A8)]
The perturbation term $g(x,\theta)$ has a finite Fourier series, i.e.,
\[
g(x,\theta)
 =\sum_{j=-N}^N\hat{g}_j(x)e^{ij\theta},
\]
for some $N\in\Nset$.
\end{enumerate}
We see that
 the Fourier coefficients $\hat{g}_j(x)$, $j=-N,\ldots,N$, are analytic in $x$,
 since $g(x,\theta)$ is analytic in $(x,\theta)$.
In addition, we have $\hat{g}_j^\ast(x)=\hat{g}_{-j}(x)$
 since $g(x,\theta)$ must be real on $\Rset^n\times\Sset^1$, where
\[
\hat{g}_j(x)=\frac{1}{2\pi}\int_0^{2\pi}g(x,\theta)e^{-ij\theta}\d\theta,
\]
and the superscript `$\ast$' represents complex conjugate.
Under assumption (A8) we rewrite \eqref{eqn:syse} as
\begin{equation}
\begin{split}
&
\dot{x}=f(x)+\epsilon a_0(x)+\sum_{j=1}^N(a_j(x)u_j+b_j(x)v_j),\\
&
\dot{\epsilon}=0,\quad
\dot{u}_j=-j\nu v_j,\quad
\dot{v}_j=j\nu u_j,\quad
j=1,\ldots,N,
\end{split}
\label{eqn:rsys}
\end{equation}
where
\begin{align*}
&
a_0(x)=\hat{g}_0(x),\quad
a_j(x)=\hat{g}_j(x)+\hat{g}_{-j}(x),\\
&
b_j(x)=i(\hat{g}_j(x)-\hat{g}_{-j}(x)),\quad
j=1,\ldots,N.
\end{align*}
Note that $a_0(x)$, $a_j(x)$ and $b_j(x)$, $j=1,\ldots,N$, are real for $x\in\Rset^n$,
 and that $(u_j,v_j)=(\epsilon\cos j\nu t,\epsilon\sin j\nu t)$ is a solution
 to the $(u_j,v_j)$-components of \eqref{eqn:rsys} for $j=1,\ldots,N$.
Let $u=(u_1,\ldots,u_N)$ and $v=(v_1,\ldots,v_N)$.
Our main theorem is stated as follows.

\begin{thm}
\label{thm:main}
Suppose that the Melnikov function $M(\theta;c)$ is not constant
 under assumptions {\rm(A1)-(A8)}.
Then the system \eqref{eqn:rsys} is not real-meromorphically integrable near
\begin{align*}
\hat{\Gamma}(c)=&\{(x,\epsilon,u,v)=(x^\h(t;c),0,0,0)
 \in\Rset^n\times\Rset\times\Rset^N\times\Rset^N\mid t\in\Rset\}\\
& \cup\{(x_\pm(c),0,0,0)\}
\end{align*}
in $\Rset^{2N+n+1}$.
\end{thm}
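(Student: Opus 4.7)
The plan is to apply the Morales-Ramis-Ayoul-Zung theorem to the variational equation (VE) of the extended autonomous system \eqref{eqn:rsys} along the integral curve $\hat\Gamma(c)$. Under the hypothesis of real-meromorphic integrability of \eqref{eqn:rsys}, first integrals and commutative vector fields extend by analytic continuation to complex-meromorphic ones on a neighborhood of $\hat\Gamma(c)$ in $\Cset^{2N+n+1}$, so by Ayoul-Zung the identity component $G^0$ of the differential Galois group of the VE must be abelian. The strategy is to show that non-constancy of $M(\theta;c)$ forces $G^0$ to be non-abelian, producing a contradiction.

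First I compute the VE at $(x^\h(t;c),0,0,0)$. It has a block-triangular structure: the $(\xi_\epsilon,\xi_u,\xi_v)$-block decouples as $\dot\xi_\epsilon=0$ together with $\dot\xi_{u_j}=-j\nu\xi_{v_j}$, $\dot\xi_{v_j}=j\nu\xi_{u_j}$ for $j=1,\ldots,N$, a direct sum of rotation blocks whose solutions involve $1$ and $e^{\pm ij\nu t}$; while the $\xi_x$-block is a copy of \eqref{eqn:ve} driven linearly by $(\xi_\epsilon,\xi_u,\xi_v)$. Feeding the explicit particular solution $\xi_\epsilon=1$, $(\xi_{u_j},\xi_{v_j})=(\cos(j\nu t+j\theta),\sin(j\nu t+j\theta))$ into the forcing yields exactly $g(x^\h(t;c),\nu t+\theta)$, thanks to the Fourier identities defining $a_0,a_j,b_j$. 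Pairing $\xi_x$ with the bounded AVE-solution $\psi_2(t;c)$ supplied by (A7) gives $\tfrac{\d}{\d t}(\psi_2\cdot\xi_x)=\psi_2\cdot g(x^\h,\nu t+\theta)$, whose improper integral on $\Rset$ is the Melnikov function $M(\theta;c)$.

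Next I translate this analytic identity into the Galois group $G$ of the VE, taken over a differential base field $K$ obtained by adjoining $e^{i\nu t}$ and the entries of a fundamental matrix of \eqref{eqn:ve} to $\Cset(t)$ after a suitable compactification of the heteroclinic orbit that also accommodates the equilibria $x_\pm(c)$. The $(\xi_u,\xi_v)$-rotations contribute a torus $T\subset G^0$ whose character at frequency $k\nu$ acts non-trivially on $\xi_{u_k}\pm i\xi_{v_k}$. If $M(\theta;c)$ is not constant then some Fourier coefficient $\hat{M}_k(c):=\int_{-\infty}^\infty\psi_2(t;c)\cdot\hat{g}_k(x^\h(t;c))e^{ik\nu t}\,\d t$ with $k\neq 0$ is non-zero; this coefficient measures exactly the obstruction to solving the inhomogeneous VE with $k$-th-mode forcing by a $K$-meromorphic function. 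The resulting Picard-Vessiot extension is a non-split extension of the torus factor by an additive line, and the associated unipotent generator $U\in G^0$ has a non-zero entry only in the $k$-th frequency column. Conjugating $U$ by the rotation $R_\alpha\in T$ at frequency $k\nu$ multiplies that entry by $e^{ik\alpha}$, so choosing $\alpha$ with $e^{ik\alpha}\neq 1$ gives $R_\alpha UR_\alpha^{-1}\neq U$, contradicting the abelianness of $G^0$.

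The main obstacle will be executing the Galois-theoretic step rigorously: choosing a base field and a compactification that genuinely accommodates both the essential singularities of $e^{i\nu t}$ at $t=\pm\infty$ and the irregular behavior near the equilibria $x_\pm(c)$; verifying, using (A2), (A4) and (A7), that $\psi_2$ and $\dot x^\h$ are meromorphic up to the boundary so that the pairing computation makes Galois-theoretic sense; and arguing that the Melnikov jump of $\psi_2\cdot\xi_x$ really produces a non-trivial unipotent element of $G^0$ rather than being removable by a $K$-meromorphic gauge transformation. The reduction from real-meromorphic to complex-meromorphic integrability needed to invoke Ayoul-Zung is standard via analytic continuation but should be stated explicitly.
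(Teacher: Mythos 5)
Your outline follows the same skeleton as the paper: complexify, invoke Ayoul--Zung, reduce to the single-frequency block of the variational equation, pair the $\xi_x$-component with the bounded adjoint solution $\psi_2$ so that the Fourier coefficient $\hat{M}_k(c)$ appears as the obstruction to solving the resulting inhomogeneous equation, and exhibit a noncommuting semisimple/unipotent pair in $\G^0$. The reduction steps (your block-triangular decomposition, the identification of the forcing with $g(x^\h,\nu t+\theta)$, the pairing identity for $\psi_2\cdot\xi_x$) are correct and correspond to Lemmas~\ref{lem:2a}--\ref{lem:2d}.

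However, the step you yourself flag as ``the main obstacle'' is precisely where the proof lives, and the route you sketch for it would not work. You propose to take as base field $\Cset(t)$ with $e^{i\nu t}$ and the entries of a fundamental matrix of \eqref{eqn:ve} adjoined, and to confront ``the essential singularities of $e^{i\nu t}$ at $t=\pm\infty$.'' Two problems. First, the Morales--Ramis--Ayoul--Zung theorem controls the Galois group over the field of meromorphic functions on the phase curve of the particular solution, not over an auxiliary field enlarged by solutions of \eqref{eqn:ve}; adjoining those entries changes (shrinks) the Galois group in an uncontrolled way. Second, over the $t$-line the points $t=\pm\infty$ are irregular singularities for $\dot\eta_\ell=i\ell\nu\eta_\ell$, the functions $e^{ik\nu t}$ are single-valued on $\Rset$, and the ``torus of rotations'' you want to conjugate by does not arise as monodromy there; with an irregular singularity the monodromy group is no longer Zariski-dense, so your noncommuting pair is not produced. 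The missing idea is assumption (A6): the one-dimensional local invariant manifolds $\C_\pm(c)$ supply charts $z_\pm$ near the equilibria with $\d/\d t=h_\pm(z_\pm)\,\d/\d z_\pm$, $h_\pm(z_\pm)=\mp\lambda_{1\pm}z_\pm+O(|z_\pm|^2)$, so that on the Riemann surface $\Gamma=x^\h(U)\cup\tilde\C_+(c)\cup\tilde\C_-(c)$ the reduced system \eqref{eqn:ve2}--\eqref{eqn:ve20} has \emph{regular} singular points at $z_\pm=0$. One then computes the two local monodromies explicitly: $M_-$ is diagonal with entry $e^{-2\pi\ell\nu/\lambda_{1-}}$, while $M_+$ is conjugated by the connection matrix $B_0=\left(\begin{smallmatrix}1&\hat{M}_\ell(c)\\0&1\end{smallmatrix}\right)$ obtained by analytic continuation along $\Rset$, which injects the Melnikov coefficient into an off-diagonal entry of $M_+$. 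Noncommutativity of $M_\pm$ when $\hat{M}_\ell(c)\neq0$ is then a two-by-two matrix computation, and since the diagonal entries $e^{\mp2\pi\ell\nu/\lambda_{1\pm}}$ are positive reals different from $1$, the Zariski closures of the cyclic groups they generate are connected, which places the offending elements in $\G^0$ (a point your sketch also leaves unaddressed). Without this compactification and the explicit monodromy computation, the claim that a ``non-split extension'' yields a unipotent in $\G^0$ that fails to commute with a torus element remains an assertion rather than a proof.
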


In \cite{Y24b} the nonintegrability of time-periodic perturbations
 of single-degree-of-freedom Hamiltonian systems,
 where the perturbations are assumed to have finite Fourier series in time
 as in assumption~(A8),
 was discussed, based on a generalized version due to Ayoul and Zung \cite{AZ10}
 of the Morales-Ramis theory \cite{M99,MR01}.
See Appendix~A for its brief explanation
 and Section~2 of \cite{Y24b} for a little more details.
So it was shown that if the associated Melnikov function is not constant,
 then a system expanded as in \eqref{eqn:rsys} for \eqref{eqn:syse}
 is real-meromorphically nonintegrable near homo- and heteroclinic orbits.
The approaches of \cite{Y24b}
 are extended not in a straightforward way
 and applied to prove Theorem~\ref{thm:main}.

The author also developed a technique which permits us
 to prove \emph{complex-meromorphic} nonintegrability
 of nearly integrable dynamical systems
 near resonant periodic orbits in \cite{Y22,Y24c},
 based on the generalized version of the Morales-Ramis theory
 and its extension, the Morales-Ramis-Sim\'o theory \cite{MRS07}.
In particular, in \cite{Y22}, he showed for time-periodic perturbations
 of single-degree-of-freedom Hamiltonian systems
 that if a contour integral is not zero,
 then the perturbed system is not complex-meromorphically integrable
 near the periodic orbit
 such that the first integrals and commutative vector fields
 also depend complex-meromorphically on $\epsilon$ near $\epsilon=0$.
The technique was successfully applied to the Duffing oscillators in \cite{Y22}
 and to a forced pendulum in \cite{MY24}.
Another technique that is an extension of Poincar\'e \cite{P92} and Kozlov \cite{K83,K96}
 was developed for these systems in \cite{MY23}.
See also \cite{Y25} for a review of these techniques.
 
The outline of this paper is as follows:
In Section~2, we present preliminary results
 and reduce the problem to a lower dimensional system
 with a single frequency component.
In Section~3 we provide a proof of Theorem~\ref{thm:main}.
Finally,  in Section~4,
 we illustrate the theory for a periodically forced rigid body,
 which provides a mathematical model of a quadrotor helicopter.


\section{Preliminaries}

In this section we give preliminary results for the proof of Theorem~\ref{thm:main}.

\subsection{Reduction of the problem to single-frequency systems}
Letting $y_0=\epsilon$ and
\[
y_j=\tfrac{1}{2}(u_j+i v_j),\quad
y_{-j}=\tfrac{1}{2}(u_j-i v_j),\quad
j=1,\ldots,N,
\]
we rewrite \eqref{eqn:rsys} as
\begin{equation}
\dot{x}=f(x)+\sum_{j=-N}^N\hat{g}_j(x)y_j,\quad
\dot{y}_j=ij\nu y_j,\quad
j=-N,\ldots,N.
\label{eqn:csys}
\end{equation}
We easily see that
 $y_j=\tfrac{1}{2}\epsilon e^{ij\nu t}$ is a solution to the $y_j$-component of \eqref{eqn:csys}
 for $j\in\{-N,\ldots,N\}\setminus\{0\}$.
Let $y=(y_{-N},\ldots,y_N)$.

\begin{lem}
\label{lem:2a}
If the system \eqref{eqn:rsys} is real-meromorphically integrable
 near $\hat{\Gamma}$ in $\Rset^{2N+n+1}$,
 then the system \eqref{eqn:csys} is complex-meromorphically integrable near
\[
\tilde{\Gamma}(c)
 =\{(x,y)=(x^\h(t;c),0)\in\Rset^n\times\Rset^{2N+1}\mid t\in\Rset\}\cup\{(x_\pm(c),0)\}
\]
in $\Cset^{2N+n+1}$.
\end{lem}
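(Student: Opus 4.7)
The plan is to view \eqref{eqn:csys} as the image of \eqref{eqn:rsys} under a $\Cset$-linear change of the auxiliary coordinates $(\epsilon,u,v)$, and then to transport integrability through this change after complexifying.

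First I would formalize the change of coordinates as the linear map
\[
\Phi\colon(\epsilon,u_1,v_1,\ldots,u_N,v_N)\mapsto(y_{-N},\ldots,y_N),\quad
y_0=\epsilon,\;\;y_{\pm j}=\tfrac{1}{2}(u_j\pm iv_j),
\]
which is a $\Cset$-linear isomorphism $\Cset^{2N+1}\to\Cset^{2N+1}$ with inverse $\epsilon=y_0$, $u_j=y_j+y_{-j}$, $v_j=-i(y_j-y_{-j})$, and hence a biholomorphism. A direct substitution then shows that $\id\times\Phi$ conjugates the vector field of \eqref{eqn:rsys} (extended to complex $(\epsilon,u,v)$) to that of \eqref{eqn:csys}, and sends $\hat{\Gamma}(c)$ bijectively onto $\tilde{\Gamma}(c)$.

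Next I would complexify the integrability data. By hypothesis, on some real neighborhood $U\subset\Rset^{2N+n+1}$ of $\hat{\Gamma}(c)$ there exist commuting real-meromorphic vector fields $W_1,\ldots,W_q$ and real-meromorphic first integrals $\tilde F_1,\ldots,\tilde F_{2N+n+1-q}$ satisfying conditions (i)-(ii) of Definition~\ref{dfn:1a}. Locally each such object is a quotient of real-analytic germs; extending their convergent power-series representations to complex polydiscs, and patching along $\hat{\Gamma}(c)$ (using compactness on each bounded piece of the time parameter together with analytic continuation along $t$), produces complex-meromorphic extensions $W_j^{\Cset}$ and $\tilde F_k^{\Cset}$ on a complex neighborhood of $\hat{\Gamma}(c)$ in $\Cset^{2N+n+1}$.

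Finally, I would transport these data by $\id\times\Phi$. Because $\Phi$ is a biholomorphism, almost-everywhere linear independence, the vanishing of Lie brackets $[W_j,W_k]$, and the first-integral relations $\D\tilde F_k\cdot W_j\equiv 0$ are all invariant under the change of variables. Hence the push-forwards commute, the pull-backs are their first integrals, and, since $\id\times\Phi$ conjugates the dynamics, the pushed-forward $W_1^{\Cset}$ is precisely the right-hand side of \eqref{eqn:csys}. This verifies Definition~\ref{dfn:1a} complex-meromorphically for \eqref{eqn:csys} on a complex neighborhood of $\tilde{\Gamma}(c)$.

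The main obstacle, modest but genuine, is the analytic step of extending real-meromorphic data from a real neighborhood of the \emph{non-compact} curve $\hat{\Gamma}(c)$ to a complex neighborhood: one has to glue the local complex-analytic extensions of numerator and denominator consistently so that a globally defined meromorphic object results on a complex neighborhood of the whole of $\hat{\Gamma}(c)$ (including the end-point equilibria $x_{\pm}(c)$). Once that extension is in hand, everything else reduces to the standard fact that Bogoyavlenskij integrability is preserved under biholomorphic coordinate changes, together with the routine verification that $\Phi$ intertwines \eqref{eqn:rsys} and \eqref{eqn:csys}.
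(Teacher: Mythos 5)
Your proposal is correct and takes essentially the same route as the paper: the published proof likewise extends the real-meromorphic first integrals and commuting vector fields of \eqref{eqn:rsys} to a complex neighborhood of $\hat{\Gamma}(c)$ and then substitutes the complex-linear change $\epsilon=y_0$, $u_j=y_j+y_{-j}$, $v_j=-i(y_j-y_{-j})$ to obtain the integrability data for \eqref{eqn:csys} near $\tilde{\Gamma}(c)$. The only difference is that you spell out the gluing of the local complex-meromorphic extensions and the invariance of Definition~\ref{dfn:1a} under the biholomorphism, steps the paper leaves implicit.
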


\begin{proof}
The proof is similar to that of  Lemma~3.1 of \cite{Y24b}.
If $F(x,u,v,\epsilon)$ and $f(x,u,v,\epsilon)$ are, respectively,
 a first integral and commutative vector field for \eqref{eqn:rsys}
 near $\hat{\Gamma}$ in $\Rset^{2N+n+1}$,
 then so are they for \eqref{eqn:rsys} near $\hat{\Gamma}$ in $\Cset^{2N+n+1}$,
 and consequently so are $F(x,\tilde{u},\tilde{v},y_0)$ and $f(x,\tilde{u},\tilde{v},\epsilon)$
 for \eqref{eqn:csys} near $\tilde{\Gamma}$,
 where $\tilde{u}=(y_1+y_{-1},\ldots,y_N+y_{-N})$
 and $i\tilde{v}=(y_1-y_{-1},\ldots,y_N-y_{-N})$.
Thus, we obtain the desired result.
\end{proof}

The Melnikov function $M(\theta;c)$ is not constant if and only if
\begin{equation}
\hat{M}_j(c):=\int_{-\infty}^\infty f(x^\h(t;c))\cdot\hat{g}_j(x^\h(t;c))e^{ij\nu t}\d t\neq 0
\label{eqn:Mj}
\end{equation}
for some $j=\{-N,\ldots,N\}\setminus\{0\}$, since
\[
M(\theta;c)=\int_{-\infty}^\infty f(x^\h(t;c))\cdot
 \left(\sum_{j=-N}^N\hat{g}_j(x^\h(t;c))e^{ij(\nu t+\theta)}\right)\d t
 =\sum_{j=-N}^N\hat{M}_je^{ij\theta}.
\]
By Lemma~\ref{lem:2a}, to prove Theorem~\ref{thm:main},
 we only have to show that
 the system \eqref{eqn:csys} is complex-meromorphically integrable near $\tilde{\Gamma}(c)$
 if $\hat{M}_j(c)\neq 0$ for some $j\neq 0$.

We will apply Theorem~\ref{thm:MR}
 to the nonconstant particular solution $(x,y)=(x^\h(t;c),0)$ for \eqref{eqn:csys}
  in Section~3.
The VE of \eqref{eqn:csys} along the solution is given by
\begin{equation}
\dot{\xi}=\D f(x^\h(t;c))\xi+\sum_{j=-N}^N\hat{g}_j(x^\h(t;c))\eta_j,\quad
\dot{\eta}_j=ij\nu\eta_j,\quad
j=-N,\ldots,N.
\label{eqn:veh}
\end{equation}
Obviously, we have the following as in Lemma~3.2 of \cite{Y24b}.

\begin{lem}
\label{lem:2b}
If the identity component of the differential Galois group for \eqref{eqn:veh}
 is commutative,
 then so are those of its $(\xi,\eta_\ell)$-components with $\eta_j=0$, $j\neq\ell$,
\begin{equation}
\dot{\xi}=\D f(x^\h(t;c))\xi+\hat{g}_\ell(x^\h(t;c))\eta_\ell,\quad
\dot{\eta}_\ell=i\ell\nu\eta_\ell,
\label{eqn:vehl}
\end{equation}
for $\ell=-N,\ldots,N$.
\end{lem}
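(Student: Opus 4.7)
The plan is to realize \eqref{eqn:vehl} as the restriction of \eqref{eqn:veh} to a flow-invariant linear subspace and then invoke the standard functoriality of the differential Galois group with respect to such restrictions, exactly as in Lemma~3.2 of \cite{Y24b}.

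First I would observe that for each $\ell\in\{-N,\ldots,N\}$, the linear subspace
\[
L_\ell=\{(\xi,\eta_{-N},\ldots,\eta_N)\mid\eta_j=0\text{ for }j\neq\ell\}
\]
is invariant under the flow of \eqref{eqn:veh}. This is immediate because each $\eta_j$ obeys the autonomous scalar equation $\dot{\eta}_j=ij\nu\eta_j$, so a solution whose $\eta_j$-component vanishes at one instant has $\eta_j\equiv 0$. Restricting the full variational system to $L_\ell$ produces precisely the subsystem \eqref{eqn:vehl}.

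Next I would pass to Picard–Vessiot extensions. Because every solution of \eqref{eqn:vehl} is, after inserting zeros in the remaining $\eta_j$ slots, a solution of \eqref{eqn:veh}, the entries of a fundamental matrix of \eqref{eqn:vehl} lie in the Picard–Vessiot field $K$ of \eqref{eqn:veh}. Hence the Picard–Vessiot field $K_\ell$ of \eqref{eqn:vehl} is contained in $K$ (over the base differential field of meromorphic functions on the heteroclinic curve), and by the Galois correspondence the differential Galois group $G_\ell$ of \eqref{eqn:vehl} is a quotient of the differential Galois group $G$ of \eqref{eqn:veh} via the natural restriction-to-$L_\ell$ homomorphism $\pi_\ell\colon G\to G_\ell$.

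Finally I would conclude by a general fact about linear algebraic groups: for a surjective algebraic group morphism $\pi_\ell\colon G\to G_\ell$, one has $\pi_\ell(G^0)=G_\ell^0$. If $G^0$ is commutative, its homomorphic image $G_\ell^0$ is commutative as well, which is the desired conclusion. No step here poses a substantive difficulty; the only point that requires care is writing out the Picard–Vessiot inclusion $K_\ell\subset K$ correctly, but this is a direct consequence of the invariance of $L_\ell$ already noted.
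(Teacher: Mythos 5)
Your proof is correct and follows the standard route: the paper itself gives no argument beyond declaring the statement obvious by analogy with Lemma~3.2 of \cite{Y24b}, and that reference's argument is exactly the one you spell out — invariance of the subspace $\{\eta_j=0,\ j\neq\ell\}$, inclusion of Picard--Vessiot fields $K_\ell\subset K$, surjectivity of the restriction morphism onto the Galois group of the subsystem, and preservation of commutativity of identity components under surjective morphisms of algebraic groups. Nothing further is needed.
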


Based on Theorem~\ref{thm:MR} and Lemmas~\ref{lem:2a} and \ref{lem:2b},
 we only have to show that the identity component of the differential Galois group of \eqref{eqn:vehl} is not commutative
 if $\hat{M}_\ell(c)\neq 0$ for some $\ell\neq 0$,
 to prove Theorem~\ref{thm:main}.

\subsection{Reduction of the VE \eqref{eqn:vehl}}

Let $c=(c_1,\ldots,c_m)\in I$.
Substituting $x=x^\h(t;c)$ into \eqref{eqn:sys0}
 and differentiating the resulting equation with respect to $c_j$, $j=1,\ldots,m$,
 we easily see that
\begin{equation}
\xi=\frac{\partial x^\h}{\partial c_j}(t;c),\quad
j=1,\ldots,m,
\label{eqn:dxdc}
\end{equation}
are solutions to the variational equation \eqref{eqn:ve}.
We also differentiate the relations
\[
F(x^\h(t;c))=c,\quad
f(x_\pm(c))=0
\] 
with respect to $c_j$ to obtain
\begin{equation}
\D F(x^\h(t;c))\frac{\partial x^\h}{\partial c_j}(t;c)=e_j,\quad
\D f(x_\pm(c))\frac{\partial x_\pm}{\partial c_j}(c)=0,
\label{eqn:rel1}
\end{equation}
where $e_j\in\Rset^m$ is a vector of which the $j$th element is one and all the others are zero.
Hence, the solutions \eqref{eqn:dxdc} are linearly independent
 and tend to linearly independent eigenvectors for the zero eigenvalue of multiplicity $m$
 as $t\to\pm\infty$
 since $\lim_{t\to\pm\infty}\D F_j(x^\h(t;c))$, $j=1,\ldots,m$, are so by (A2).
 
Let $n_0=n_\s+n_\u$.
Using the above fact and Theorem~1 of \cite{G92},
 we immediately obtain the following lemma on the VE \eqref{eqn:ve}.
 
\begin{lem}
\label{lem:2c}
There exists  a fundamental matrix $\Phi(t;c)=(\phi_1(t;c),\ldots,\phi_n(t;c))$ to \eqref{eqn:ve}
 such that
\begin{equation}
\begin{split}
&
\lim_{t\to\pm\infty}\phi_1(t;c)=0,\quad
\lim_{t\to\pm\infty}\phi_2(t;c)=\infty,\\
&
\lim_{t\to+\infty}\phi_j(t;c)=0,\quad
\lim_{t\to-\infty}\phi_j(t;c)=\infty,\quad
3\le j\le n_\s+1,\\
&
\lim_{t\to+\infty}\phi_j(t;c)=\infty,\quad
\lim_{t\to-\infty}\phi_j(t;c)=0,\quad
n_\s+1<j\le n_0,\\
&
\lim_{t\to+\infty}|\phi_j(t;c)|<\infty,\quad
\lim_{t\to-\infty}|\phi_j(t;c)|<\infty,\quad
j>n_0,
\end{split}
\label{eqn:phi}
\end{equation}
where the convergence to zero and divergence for $\phi_j(t;c)$, $j\le n_0$,
 is exponentially fast.
\end{lem}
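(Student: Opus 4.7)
The plan is to construct each column of $\Phi(t;c)$ explicitly by matching the spectral structure of $\D f(x_\pm(c))$ at the two endpoints with concrete solutions of \eqref{eqn:ve}, and then to appeal to Theorem~1 of \cite{G92} to pin down the asymptotic behavior at the opposite end. By the standard asymptotic theory of linear ODEs (CL55, \S 3.8) applied near $x_\pm(c)$ together with (A4), the solution space of \eqref{eqn:ve} admits an $n_\s$-dimensional subspace $D^+$ of exponentially decaying solutions at $t=+\infty$ and an $n_\u$-dimensional subspace $D^-$ of exponentially decaying solutions at $t=-\infty$; moreover the $m$-dimensional zero eigenspace guaranteed by (A2)--(A4) supplies, near each equilibrium, an $m$-dimensional subspace of bounded non-decaying solutions.

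First I would set $\phi_1=\dot{x}^\h(t;c)$: by (A4)--(A5) it decays exponentially at both ends, and by (A7) it spans $D^+\cap D^-$. Next, the $m$ solutions $\partial x^\h/\partial c_j$ from \eqref{eqn:dxdc}, linearly independent by the first relation in \eqref{eqn:rel1}, tend as $t\to\pm\infty$ to eigenvectors of the zero eigenvalue at $x_\pm(c)$ (by the second relation in \eqref{eqn:rel1} together with (A2)); they are bounded but non-decaying at both ends and thus fill the slots $\phi_{n_0+1},\dots,\phi_n$. I would then pick $n_\s-1$ solutions in $D^+$ complementary to $\phi_1$ as $\phi_3,\dots,\phi_{n_\s+1}$, and $n_\u-1$ solutions in $D^-$ complementary to $\phi_1$ as $\phi_{n_\s+2},\dots,\phi_{n_0}$. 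A dimension count $1+(n_\s-1)+(n_\u-1)+m=n-1$ leaves exactly one remaining direction, $\phi_2$, and any representative of this direction must grow at both ends, being contained neither in $D^\pm$ nor in the bounded subspace.

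The main obstacle will be establishing the asymptotic behavior at the \emph{opposite} end for the hyperbolic columns: namely, that solutions in $D^+\setminus\Span\{\phi_1\}$ actually diverge — and do so exponentially — as $t\to-\infty$, rather than being absorbed into the $m$-dimensional bounded center subspace at $x_-(c)$, and symmetrically for $D^-$ at $+\infty$. This is precisely the content of Theorem~1 of \cite{G92}, which exploits the exponential dichotomy structure of the VE along a heteroclinic orbit between nonhyperbolic saddles to produce a basis with independent prescribed exponential rates at both endpoints that is compatible with the zero-eigenvalue block. Invoking that theorem with the specific choices of $\phi_1$ and of the bounded columns $\partial x^\h/\partial c_j$ above yields a fundamental matrix satisfying \eqref{eqn:phi}, the exponential rate of the decay and divergence for $j\le n_0$ being inherited directly from the nonzero eigenvalues in (A4).
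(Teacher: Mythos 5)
Your proposal is correct and follows essentially the same route as the paper, which obtains the lemma by combining the observation that the solutions $\partial x^\h/\partial c_j$ of \eqref{eqn:dxdc} are linearly independent and tend to zero-eigenvectors of $\D f(x_\pm(c))$ with Theorem~1 of \cite{G92}. Your version merely spells out the dimension count and the choice of the columns $\phi_1$, $\phi_2$ and $\phi_{n_0+j}$ in more detail, while correctly identifying that the exponential divergence at the opposite endpoints is exactly what the cited theorem of Gruendler supplies.
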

In particular, $\phi_1(t;c)$ is a bounded solution to the VE \eqref{eqn:ve},
 so that we can take $\phi_1(t;c)=\dot{x}^\h(t;c)$ by assumption~(A7).
We write
\[
\phi_1(t)=(\xi_{1\pm}+o(1))e^{\mp\lambda_{1\pm} t},\quad
\phi_2(t)=(\xi_{2\pm}+o(1))e^{\pm\lambda_{2\pm} t}
\]
as $t\to\pm\infty$,
 where $\xi_{j\pm}\in\Rset$, $j=1,2$, are nonzero constants,
 $\mp\lambda_{1\pm}$ and $\pm\lambda_{2\pm}$ are eigenvalues of $\D f(x_\pm(c))$
 and $\lambda_{j\pm}>0$, $j=1,2$.
We also see that
 $\Psi(t;c)=(\Phi(t;c)^{-1})^\T$ is a fundamental matrix to the AVE \eqref{eqn:ave}.
Let $\Psi(t;c)=(\psi_1(t;c),\ldots,\linebreak\psi_n(t;c))$.
We have
\begin{equation}
\begin{split}
&
\lim_{t\to\pm\infty}\psi_1(t;c)=\infty,\quad
\lim_{t\to\pm\infty}\psi_2(t;c)=0,\\
&
\lim_{t\to+\infty}\psi_j(t;c)=\infty,\quad
\lim_{t\to-\infty}\psi_j(t;c)=0,\quad
3\le j\le n_\s+1,\\
&
\lim_{t\to+\infty}\psi_j(t;c)=0,\quad
\lim_{t\to-\infty}\psi_j(t;c)=\infty,\quad
n_\s+1<j\le n_0,\\
&
\lim_{t\to+\infty}|\psi_j(t;c)|<\infty,\quad
\lim_{t\to-\infty}|\psi_j(t;c)|<\infty,\quad
j>n_0,
\end{split}
\label{eqn:psi}
\end{equation}
where the convergence to zero and divergence for $\psi_j(t;c)$, $j\le n_0$,
 are also exponentially fast.
In particular, $\psi_2(t;c)$ is the solution to the AVE \eqref{eqn:ave}
 in the definition \eqref{eqn:M} of the Melnikov function $M(\theta:c)$.
Moreover,
\[
\psi_1(t)=(\xi_{1\pm}^{-1}+o(1))e^{\pm\lambda_{1\pm} t},\quad
\psi_2(t)=(\xi_{2\pm}^{-1}+o(1))e^{\mp\lambda_{2\pm} t}
\]
as $t\to\pm\infty$,

\begin{rmk}
\label{rmk:2a}
From the argument at the beginning of this section,
 we take the solutions \eqref{eqn:dxdc} as $\phi_{n_0+j}(t;c)$, $j=1,\ldots,m$,
 in Lemma~$\ref{lem:2c}$.
Then $\psi_{n_0+j}(t;c)$, $j=1,\ldots,m$, in \eqref{eqn:psi} are given by
\begin{equation}
\xi=\D F_j(x^\h(t;c))^\T,\quad
j=1,\ldots,m.
\label{eqn:DF}
\end{equation}
Indeed, we differentiate
\[
\D F_j(x)\cdot f(x)=0
\]
with respect to $x$ to obtain
\[
\D^2 F_j(x)f(x)+\D F_j(x)\D f(x)=0,\quad
j=1,\ldots,m,
\]
so that Eq.~\eqref{eqn:DF} gives linearly independent solutions
 to the AVE \eqref{eqn:ave} since
\[
\frac{\d}{\d t}\D F_j(x^\h(t;c))=\D^2 F_j(x^\h(t;c))\dot{x}^\h(t;c)=\D^2 F_j(x^\h(t;c))f(x^\h(t;c)).
\]
Moreover, it follows from \eqref{eqn:rel1} that
 they tend to linearly independent eigenvectors of $\D f(x_\pm(c))$
 for the zero eigenvalue of multiplicity $m$ as $t\to\pm\infty$.
\end{rmk}

Letting $\tilde{\xi}\in\Cset$ and substituting $\xi=\tilde{\xi}\phi_2(t)$ into \eqref{eqn:vehl},
 we obtain
\begin{equation}
\dot{\tilde{\xi}}=\tilde{g}_\ell(x^\h(t;c))\eta_\ell,\quad
\dot{\eta}_\ell=i\ell\nu\eta_\ell,
\label{eqn:ve1}
\end{equation}
where
\[
\tilde{g}_\ell(x^\h(t;c))=\psi_2(t;c)\cdot\hat{g}_\ell(x^\h(t;c)),
\]
since $\phi_2(t)$ is a solution to \eqref{eqn:ve}.
It is clear that Lemma~\ref{lem:2b} can be modified as follows.

\begin{lem}
\label{lem:2d}
If the identity component of the differential Galois group for \eqref{eqn:veh}
 is commutative,
 then so are those of \eqref{eqn:ve1} for $\ell=-N,\ldots,N$.
\end{lem}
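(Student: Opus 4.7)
The plan is to leverage Lemma \ref{lem:2b} and realize \eqref{eqn:ve1} as a natural subquotient of \eqref{eqn:vehl}, so that commutativity of the identity component transfers automatically.

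First, by Lemma \ref{lem:2b}, the hypothesis gives commutativity of the identity component of the differential Galois group of \eqref{eqn:vehl} for each $\ell = -N,\ldots,N$. It therefore suffices to show, for each such $\ell$, that commutativity of the identity component of the Galois group of \eqref{eqn:vehl} implies the same for \eqref{eqn:ve1}.

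Second, I would make the reduction precise at the level of differential fields. Applying variation of parameters to the $\xi$-equation of \eqref{eqn:vehl} via the fundamental matrix $\Phi(t;c)$ from Lemma \ref{lem:2c}, setting $\xi = \Phi(t;c) z$, one obtains
\[
\dot z = \Psi(t;c)^\T \hat g_\ell(x^\h(t;c))\,\eta_\ell,\qquad \dot\eta_\ell = i\ell\nu\eta_\ell,
\]
whose $k$th scalar component reads $\dot z_k = \psi_k(t;c)\cdot\hat g_\ell(x^\h(t;c))\,\eta_\ell$. For $k=2$ this is exactly \eqref{eqn:ve1}, upon identifying $\tilde\xi$ with $z_2$. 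Since $\Phi$ and $\Psi$ both lie in the Picard--Vessiot extension $K$ of the homogeneous VE \eqref{eqn:ve}, this transformation is carried out over $K$, and \eqref{eqn:ve1} appears as the $(z_2,\eta_\ell)$-subsystem of the decoupled system over $K$.

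Third, I would transfer the commutativity via the standard Galois correspondence. Let $G$ be the Galois group of \eqref{eqn:vehl} over the ground $t$-field, and let $N \triangleleft G$ be the closed normal subgroup fixing $K$ pointwise, equivalently the Galois group of \eqref{eqn:vehl} over $K$. Since $N$ is a closed subgroup of $G$, its identity component $N^0$ sits inside $G^0$ and is therefore commutative. Restriction to the $(z_2,\eta_\ell)$-component provides a surjective morphism $N \twoheadrightarrow G_\ell$, where $G_\ell$ denotes the Galois group of \eqref{eqn:ve1} over $K$. This surjection sends $N^0$ onto $G_\ell^0$, which is therefore a quotient of the commutative group $N^0$ and hence commutative.

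The point that requires care is the bookkeeping of base fields — in particular, justifying that the Galois group of \eqref{eqn:ve1} in the statement is taken over $K$ rather than over the ground field, given that the coefficient $\tilde g_\ell(x^\h(t;c)) = \psi_2(t;c)\cdot\hat g_\ell(x^\h(t;c))$ generally lies in $K$ and not in the ground field. Once this identification is in place, the conclusion follows by the same subquotient principle already used in the proof of Lemma \ref{lem:2b} from \cite{Y24b}.
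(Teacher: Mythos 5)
Your argument is correct and is essentially the argument the paper leaves implicit: the paper performs the reduction $\xi=\tilde\xi\phi_2(t)$ (really the gauge transformation $\xi=\Phi(t;c)z$ followed by projection onto the $z_2$-component, exactly as you set it up) and declares the transfer of commutativity ``clear,'' which is precisely your subquotient argument --- the identity component of the closed subgroup fixing $K$ sits inside $\G^0$, and then surjects onto the identity component of the Galois group of the $(z_2,\eta_\ell)$-quotient. The base-field caveat you flag is genuine and is indeed how the lemma must be read (it is consistent with Section~3, where the coefficient field is the meromorphic function field on the Riemann surface $\Gamma$, over which $\tilde g_\ell$ lives); working over $K$ as you do is the standard and correct resolution.
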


Taking the limits $t\to\pm\infty$ in \eqref{eqn:ve1}, we have
\begin{equation}
\dot{\tilde{\xi}}=0,\quad
\dot{\eta}_\ell=i\ell\nu\eta_\ell.
\label{eqn:ve10}
\end{equation}
We easily see that
\[
\tilde{\Phi}(t;c)=\begin{pmatrix}
1 & Y(t;c)\\
0 & e^{i\ell\nu t}
\end{pmatrix}\quad\mbox{and}\quad
\tilde{\Phi}_\pm(t;c)=\begin{pmatrix}
1 & 0\\
0 & e^{i\ell\nu t}
\end{pmatrix}
\]
are, respectively, fundamental matrices of \eqref{eqn:ve1} and \eqref{eqn:ve10}
  with $\tilde{\Phi}(0),\tilde{\Phi}_\pm(0)=\id_2$, where
\[
Y(t;c)=\int_0^t\tilde{g}_\ell(x^\h(\tau;c))e^{i\ell\nu\tau}\d\tau
=\int_0^t\psi_2(\tau;c)\cdot\hat{g}(x^\h(\tau;c))e^{i\ell\nu\tau}\d\tau.
\]
Thus, Eqs.~\eqref{eqn:ve1} and \eqref{eqn:ve10} can be solved completely.


\section{Proof of Theorem~\ref{thm:main}}

\begin{figure}
\includegraphics[scale=1]{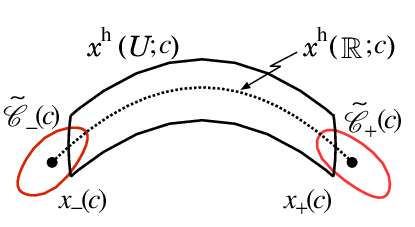}
\caption{Riemann surface $\Gamma=x^\h(U)\cup\tilde{\C}_+\cup\tilde{\C}_-$.}
\label{fig:4a}
\end{figure}

We are now in a position to prove Theorem~\ref{thm:main}.
Let $\ell\neq 0$ and fix $c\in I$.
Let $\tilde{\C}_\pm(c)$ be the complexification of $\C_\pm(c)$
 such that $\tilde{\C}_\pm(c)$ contain no other equilibrium than $x_\pm(c)$.
Let $R > 0$ be  sufficiently large and let $U$
 be a neighborhood of the open interval  $(-R,R) \subset \Rset$ in $\Cset$
 such that $x^\h(U;c)$ contains no equilibrium
 and intersects both $\C_+(c)$ and $\C_-(c)$.
Obviously, $x^\h(U;c)$ is a one-dimensional complex manifold with boundary.
We take $\Gamma=x^\h(U)\cup\tilde{\C}_+(c)\cup\C_-(c)$
 and the inclusion map as immersion $\iota:\Gamma\to\Cset^n$.
See Fig.~\ref{fig:4a}.
If $x_+(c)=x_-(c)$ and $x^\h(t;c)$ is a homoclinic orbit,
 then small modifications are needed in the definitions of $\Gamma$ and $\iota$.
 See Section~4.2 of \cite{BY12}.
Let $0_\pm\in\Gamma$ denote points corresponding to the equilibria $x_\pm(c)$.
Taking three charts, $\C_\pm(c)$ and $x^\h(U;c)$, 
 we rewrite the linear system \eqref{eqn:vehl} on $\Gamma$ as follows.

In $x^\h(U;c)$ we use the complex variable $t\in U$ as the coordinate
 and rewrite \eqref{eqn:ve1} as
\begin{equation}
\frac{\d\tilde{\xi}}{\d t}=\tilde{g}(\iota(t))\eta_\ell,\quad
\dot{\eta}_\ell=i\ell\nu\eta_\ell,
\label{eqn:ve2}
\end{equation}
which has no singularity there.
In $\C_+(c)$ and $\C_-(c)$
 there exist local coordinates $z_+$ and $z_-$, respectively,
 such that $z_\pm(0_\pm)=0$ and $\d/\d t=h_\pm(z_\pm)\d/\d z_\pm$,
 where $h_\pm(z_\pm)=\mp\lambda_{1\pm}z_\pm+O(|z_\pm|^2)$ are holomorphic functions
 near $z_\pm=0$.
We use the coordinates $z_\pm$ and rewrite \eqref{eqn:vehl} as
\begin{equation}
\frac{\d\tilde{\xi}}{\d z_\pm}
=\frac{\tilde{g}_\ell(z_\pm)}{h_\pm(z_\pm)}\eta_\ell,\quad
\frac{\d\eta_\ell}{\d z_\pm}=\frac{i\ell\nu}{h_\pm(z_\pm)}\eta_\ell,
\label{eqn:ve20}
\end{equation}
which have regular singularities at $z_\pm=0$.

\begin{lem}
\label{lem:3a}
If $\hat{M}_\ell(c)\neq 0$,
 then the identity component of the differential Galois group
 for the linear system consisting of \eqref{eqn:ve2} and \eqref{eqn:ve20} on $\Gamma$
 is not commutative.
\end{lem}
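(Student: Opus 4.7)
The plan is to exhibit a necessary and sufficient condition for the identity component $G^0$ of the differential Galois group of the combined system on $\Gamma$ to be commutative, and then show that this condition fails precisely when $\hat M_\ell(c)\neq 0$.

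First I would describe $G$ concretely. In the fundamental basis $v_1=(1,0)^{\T}$, $v_2=(Y(t),\eta_\ell(t))^{\T}$ with $\eta_\ell(t)=e^{i\ell\nu t}$ and $Y(t)=\int_0^{t}\tilde g_\ell(\iota(s))\,\eta_\ell(s)\,ds$, any $\sigma\in G$ fixes the constant solution $v_1$ and acts on $v_2$ by $\sigma v_2=c\,v_2+d\,v_1$ with $(c,d)\in\Cset^{*}\times\Cset$. Hence $G$ embeds as a closed algebraic subgroup of the Borel $B'\cong\Cset^{*}\ltimes\Cset$, whose only proper connected algebraic subgroups are $1$-dimensional and abelian. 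Thus $G^0$ is non-commutative iff $\dim G^0=2$, equivalently iff $Y\notin K(\eta_\ell)$, where $K$ denotes the field of meromorphic functions on $\Gamma$. Writing a candidate $Y\in K(\eta_\ell)$ as a Laurent polynomial $\sum_n a_n\eta_\ell^{n}$ over $K$ and matching powers of $\eta_\ell$ in $Y'=\tilde g_\ell\eta_\ell$ via $\eta_\ell'=i\ell\nu\eta_\ell$ yields $a_n'+in\ell\nu a_n=0$ for $n\neq 1$ and $a_1'+i\ell\nu a_1=\tilde g_\ell$. For $n\neq 0$ the homogeneous equation admits no nontrivial solution in $K$, since its solutions $C_n e^{-in\ell\nu t}$ carry the purely imaginary local exponents $\mp in\ell\nu/\lambda_{1\pm}$ at $0_\pm$. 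So $Y\in K(\eta_\ell)$ iff there exists $b\in K$ satisfying
\begin{equation*}
b'+i\ell\nu b=\tilde g_\ell.
\end{equation*}

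On the real part of $x^\h(U;c)$ the general solution is $b(t)=e^{-i\ell\nu t}\bigl[C+\int_0^{t}e^{i\ell\nu s}\tilde g_\ell(\iota(s))\,ds\bigr]$ for some $C\in\Cset$, and the decay $\psi_2(t;c)=O(e^{\mp\lambda_{2\pm}t})$ as $t\to\pm\infty$ from Section~2.2, together with the boundedness of $\hat g_\ell(x^\h(\cdot;c))$, ensures that both tail integrals converge absolutely. In the coordinate $z_+$ at $0_+$, using $t=-\lambda_{1+}^{-1}\log z_++O(z_+)$, the factor $e^{-i\ell\nu t}$ equals $z_+^{i\ell\nu/\lambda_{1+}}$ up to a holomorphic unit; since $i\ell\nu/\lambda_{1+}$ is nonzero and purely imaginary, meromorphicity of $b$ at $z_+=0$ forces the bracket to vanish at $+\infty$, giving $C=-\int_0^{\infty}e^{i\ell\nu s}\tilde g_\ell(\iota(s))\,ds$. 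The analogous analysis at $0_-$ forces $C=\int_{-\infty}^{0}e^{i\ell\nu s}\tilde g_\ell(\iota(s))\,ds$, so simultaneous compatibility requires
\[
\int_{-\infty}^{\infty}e^{i\ell\nu s}\tilde g_\ell(\iota(s))\,ds=0.
\]
By $\tilde g_\ell(\iota(s))=\psi_2(s;c)\cdot\hat g_\ell(x^\h(s;c))$ and the Fourier decomposition of $M(\theta;c)$ recorded before Lemma~\ref{lem:2a}, this integral equals $\hat M_\ell(c)$. Under the hypothesis $\hat M_\ell(c)\neq 0$ the two matching conditions are incompatible, so no $b\in K$ exists; hence $Y\notin K(\eta_\ell)$, $\dim G^0=2$, and $G^0=B'$ is non-commutative.

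The principal technical obstacle I anticipate is the careful justification of the matching step: the asymptotic expansion of $\tilde g_\ell$ at each equilibrium produces a cascade of sub-leading exponentials $e^{-\mu t}$ with $\mu$ a positive integer combination of eigenvalues of $\D f(x_\pm(c))$, each of which propagates into $b$ as a power $z_\pm^{\mu/\lambda_{1\pm}}$ with a generically irrational exponent. One must argue that the unique branching mode with purely imaginary exponent $\pm i\ell\nu/\lambda_{1\pm}$—the only one on which the constant of integration $C$ acts—is the sole obstruction captured by the Melnikov condition, and that the remaining real-exponent modes either organize themselves into meromorphic combinations by the rigid structure of the first-order ODE or can be treated separately without disturbing the Melnikov matching at the two equilibria.
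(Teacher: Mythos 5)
Your argument is correct, but it proves the lemma by a genuinely different route from the paper. The paper's proof is a direct monodromy computation: it writes down the local monodromy matrices $M_\pm$ of the regular singular points $z_\pm=0$ of \eqref{eqn:ve20} in a common basis, observes that the connection matrix $B_0$ between the two local bases is unipotent with off-diagonal entry $\hat{M}_\ell(c)$, checks that $M_+M_-\neq M_-M_+$ when $\hat{M}_\ell(c)\neq 0$, and concludes by lower-bounding $\G^0$ by the Zariski closure of the monodromy group. You instead work from above: you identify $\G$ inside the two-dimensional Borel $\Cset^*\ltimes\Cset$, reduce commutativity of $\G^0$ to the transcendence degree of the Picard--Vessiot extension, and then to the solvability in $\Kset$ of the single first-order equation $b'+i\ell\nu b=\tilde g_\ell$ --- a Risch/Liouville-type criterion in the spirit of Morales-Ruiz's note on Melnikov integrals and Picard--Vessiot theory. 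Your version proves slightly more (an ``if and only if'' modulo the converse matching), at the cost of two steps you pass over quickly: the reduction of a general element of $\Kset(\eta_\ell)$ to a Laurent polynomial (standard partial fractions, using that $\eta_\ell$ is transcendental over $\Kset$ because its local exponents $\mp i\ell\nu/\lambda_{1\pm}$ are purely imaginary and nonzero), and the matching step discussed next.

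The technical obstacle you flag at the end is not actually an obstacle for the direction the lemma requires, and you can close the matching step without any analysis of sub-leading exponents. Suppose $b\in\Kset$ solves $b'+i\ell\nu b=\tilde g_\ell$, and set $\beta(t)=e^{i\ell\nu t}b(t)=C+Y(t)$. Along the real axis $Y(t)$ converges as $t\to+\infty$ because $\psi_2$ decays exponentially; since $|e^{i\ell\nu t}|=1$ and $z_+\to 0$ radially, a pole of $b$ at $z_+=0$ would force $|\beta(t)|\to\infty$, a contradiction, so $b$ is holomorphic at $0_+$ and $\beta(t)=e^{i\ell\nu t}\bigl(b(0_+)+o(1)\bigr)$. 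Convergence of the left-hand side then forces $b(0_+)=0$ and hence $C+\lim_{t\to+\infty}Y(t)=0$; the same argument at $0_-$ gives $C+\lim_{t\to-\infty}Y(t)=0$, and subtracting yields $\hat{M}_\ell(c)=0$. The cascade of powers $z_\pm^{\mu/\lambda_{1\pm}}$ with irrational exponents is only relevant to the converse implication (that $\hat{M}_\ell(c)=0$ makes $b_{C_+}$ meromorphic), which the lemma does not assert; so your proof is complete once the limit argument above replaces the heuristic ``the bracket must vanish.''
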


\begin{proof}
We take a point on $\Rset$ near $0_-$ as the base of the fundamental matrix
 and compute the monodromy matrices $M_\pm$
 for the linear system consisting of \eqref{eqn:ve2} and \eqref{eqn:ve20}
 around $z_\pm=0$ on $\Gamma$.
We easily obtain
\begin{equation*}
M_-=
\begin{pmatrix}
1 & 0\\
0 & e^{-2\pi\ell\nu/\lambda_{1-}}
\end{pmatrix}.
\end{equation*}
Analytic continuation of the fundamental matrix
 from $0_-$ to $0_+$ along $\Rset$ yields $\tilde{\Phi}(t;c)B_0$, where
\[
B_0=\lim_{t\to+\infty}\begin{pmatrix}
1 & Y(t;c)-Y(-t;c)\\[0.5ex]
0 & 1
\end{pmatrix}
=\begin{pmatrix}1 & \hat{M}_\ell(c)\\[0.5ex]
0 & 1
\end{pmatrix}.
\]
Hence, we have
\begin{equation*}
M_+=B_0^{-1}
\begin{pmatrix}
1 & 0\\
0 & e^{2\pi\ell\nu/\lambda_{1+}}
\end{pmatrix}B_0=\begin{pmatrix}
1 & (1-e^{2\pi\ell\nu/\lambda_{1+}})\hat{M}_\ell(c)\\
0 & e^{2\pi\ell\nu/\lambda_{1+}}.
\end{pmatrix}
\end{equation*}
We easily show that if $\hat{M}_\ell(c)\neq 0$,
 then $M_+$ and $M_-$ do not commute,
 so that the differential Galois group
 has a noncommutative identity component
 since it contains the monodromy group.
\end{proof}

\begin{proof}[Proof of Theorem~$\ref{thm:main}$]
Suppose that $M(\theta;c)$ is not a constant.
Then $\hat{M}_\ell(c)\neq 0$ for some $\ell\neq 0$, as stated in Section~2.1.
Hence, it follows from Lemma~\ref{lem:2d} that
 the identity component of the differential Galois group for \eqref{eqn:veh}
 is not commutative.
Using Theorem~\ref{thm:MR} and Lemma~\ref{lem:2a},
 we complete the proof.
\end{proof}

\begin{rmk}
\label{rmk:4a}
Our approach can apply to other time dependency of the perturbations.
See Remark~{\rm 4.2(i)} of {\rm\cite{Y24b}}.
\end{rmk}


\section{Example}

\begin{figure}[t]
\includegraphics[scale=1]{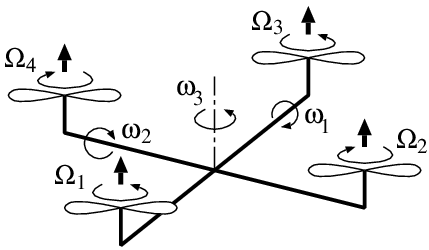}
\caption{Mathematical model for a quadrotor helicopter.}
\label{fig:4a}
\end{figure}

We apply our result to a periodically forced rigid body,
\begin{equation}
\begin{split}
\dot{\omega}_1
 =&\frac{I_2-I_3}{I_1}\omega_2\omega_3
 -\epsilon\left(\frac{\alpha}{I_1}\omega_2\sin\nu t+\delta_1\omega_1\right),\\
\dot{\omega}_2
 =&\frac{I_3-I_1}{I_2}\omega_3\omega_1
 +\epsilon\left(\frac{\alpha}{I_2}\omega_1\sin\nu t+\frac{\beta_2}{I_2}\sin\nu t
 -\delta_2\omega_2\right),\\
\dot{\omega}_3
 =&\frac{I_1-I_2}{I_3}\omega_1\omega_2
 +\epsilon\left(\frac{\beta_3}{I_3}\sin\nu t-\delta_3\omega_3\right),
\end{split}
\label{eqn:rbody}
\end{equation}
where $I_1,I_2,I_3,\alpha,\beta_2,\beta_3,\delta,\nu$ are nonnegative constants.
Equation~\eqref{eqn:rbody} represents a mathematical model of a quadrotor helicopter
 illustrated in Fig.~\ref{fig:4a} when one of the rotors becomes out of tune
 and its rotational speed is periodically modulated,
 where $\omega_j$ and $I_j$, $j=1,2,3$, respectively,
 represent the angular velocities and moments of inertia about the quadrotor's principal axes,
 and $\delta_j$, $j=1,2,3$, represent damping constants due to air resistance and so on.
See Section~5.1 of \cite{Y18} for the meaning of the other parameters.
Heteroclinic transition motions and persistence of first integrals and periodic orbits
 for \eqref{eqn:rbody} with $\delta_j=0$, $j=1,2,3$, were discussed in \cite{MY21,Y18}.
This example is also important from an application point of view
 since quadrotor helicopters are used or expected to be used in many areas.

Henceforth we assume that $I_1<I_2<I_3$.
When $\epsilon=0$, Eq.~\eqref{eqn:rbody} has two first integrals
\[
\tilde{F}_1(\omega)=\frac{1}{2}(I_1\omega_1^2+I_2\omega_2^2+I_3\omega_3^2),\quad
\tilde{F}_2(\omega)=I_1^2\omega_1^2+I_2^2\omega_2^2+I_3^2\omega_3^2,
\]
whose derivatives $\D\tilde{F}_1(\omega),\D\tilde{F}_2(\omega)$ are linearly independent a.e.,
 and it has two nonhyperbolic equilibria at $\omega=(0,\pm c,0)$
 connected by four heteroclinic orbits
\begin{align*}
&
\omega_\pm^\h(t;c)
 =\left(\pm c\sqrt{\frac{I_2(I_3-I_2)}{I_1(I_3-I_1)}}\sech kct,c\tanh kct,
 \pm c\sqrt{\frac{I_2(I_2-I_1)}{I_3(I_3-I_1)}}\sech kct\right),\\
&
\tilde{\omega}_\pm^\h(t;c)
 =\left(\pm c\sqrt{\frac{I_2(I_3-I_2)}{I_1(I_3-I_1)}}\sech kct,-c\tanh kct,
 \mp c\sqrt{\frac{I_2(I_2-I_1)}{I_3(I_3-I_1)}}\sech kct\right)
\end{align*}
on the level set $F_1(\omega):=\sqrt{2\tilde{F}_1(\omega)/I_2}=c>0$, where
\[
k=\sqrt{\frac{(I_2-I_1)(I_3-I_2)}{I_3I_1}}.
\]
See Fig.~\ref{fig:5a} for the unperturbed orbits of \eqref{eqn:rbody}
 with $\epsilon=0$ on the level set $F_1(\omega)=c$.
Note that $\D\tilde{F}_1(\omega)$ and $\D\tilde{F}_2(\omega)$ are not linearly independent
 at the equilibria $(0,\pm c,0)$.

\begin{figure}[t]
\includegraphics[scale=1.2]{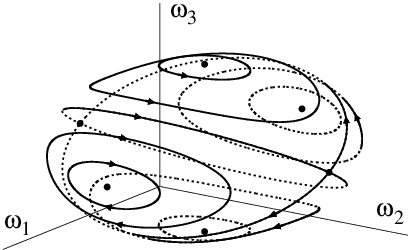}
\caption{Unperturbed orbits of \eqref{eqn:rbody} with $\epsilon=0$
 on the level set $F_1(\omega)=c$.}
\label{fig:5a}
\end{figure}

The variational equations of \eqref{eqn:rbody} with $\epsilon=0$
 along $\omega_\pm^\h(t;c)$ and $\tilde{\omega}_\pm^\h(t;c)$
 have no other independent solutions
 than $\xi=\dot{\omega}_\pm^\h(t;c)$ and $\dot{\tilde{\omega}}_\pm^\h(t;c)$, respectively,
 such that they tend to zero exponentially as $t\to\pm\infty$
 since the numbers of eigenvalues with positive and negative real parts are both one.
Thus, assumptions~(A1)-(A8) hold with $n=3$, $m,n_\s,n_\u,N=1$, $q=2$,
 and $I=[c_\ell,c_\r]$ for any $c_\ell,c_\r$ such that $0<c_\ell<c_\r<\infty$.
The corresponding adjoint variational equations
 have bounded solutions tending to zero as $t\to\pm\infty$,
\[
\psi_2(t;c)=(I_1(I_1-I_2)\omega_1(t;c),0,I_3(I_3-I_2)\omega_3(t;c))^\T,
\]
where $\omega(t;c)=\omega_\pm^\h(t;c)$ or $\tilde{\omega}_\pm^\h(t;c)$.
Moreover, we have
\[
\D F_1(\omega)=\frac{1}{cI_2}(I_1\omega_1,I_2\omega_2,I_3\omega_3)\neq 0
\]
when $F_1(\omega)=c$.

We compute the Melnikov functions as
\begin{align*}
M(\theta;c)
=&\int_{-\infty}^\infty[\alpha(I_2-I_1)\omega_1(t;c)\omega_2(t;c)
 +\beta_3(I_3-I_2)\omega_3(t;c)]\sin(\nu t+\theta_0)\d t\\
& -\int_{-\infty}^\infty[\delta_1 I_1(I_1-I_2)\omega_1(t;c)^2
 +\delta_3I_3(I_3-I_2)\omega_3(t;c)^2]\d t\\
=& \alpha(I_2-I_1)\cos\theta\int_{-\infty}^\infty\omega_1(t;c)\omega_2(t;c)\sin\nu t\,\d t\\
& +\beta_3(I_3-I_2)\sin\theta\int_{-\infty}^\infty\omega_3(t;c)\cos\nu t\,\d t\\
& -\delta_1 I_1(I_1-I_2)\int_{-\infty}^\infty\omega_1(t;c)^2\d t
  -\delta_3 I_3(I_3-I_2)\int_{-\infty}^\infty\omega_3(t;c)^2\d t,
\end{align*} 
so that
\begin{equation}
M_\pm(\theta;c)=\pm\alpha M_1(c)\cos\theta\pm\beta_3 M_2(c)\sin\theta
 -(\delta_3-\delta_1)M_3(c)
 \label{eqn:Mpm}
\end{equation}
for $\omega=\omega_\pm^\h$, and
\begin{equation}
\tilde{M}_\pm(\theta;c)=\pm\alpha M_1(c)\cos\theta\mp\beta_3 M_2(c)\sin\theta
 -(\delta_3-\delta_1)M_3(c)
\label{eqn:tMpm}
\end{equation}
for $\omega=\tilde{\omega}_\pm^\h$, where
\begin{align*}
&
M_1(c)=\pi\nu\sqrt{\frac{I_1I_2I_3^2}{(I_3-I_1)(I_3-I_2)}}\sech\left(\frac{\pi\nu}{2kc}\right),\\
&
M_2(c)=\pi\sqrt{\frac{I_1I_2(I_3-I_2)}{I_3-I_1}}\sech\left(\frac{\pi\nu}{2kc}\right),\quad
M_3(c)=\frac{2cI_2(I_2-I_1)(I_3-I_2)}{k(I_3-I_1)}.
\end{align*}

We rewrite \eqref{eqn:rbody} as
\begin{equation}
\begin{split}
&
\dot{\omega}_1
=\frac{I_2-I_3}{I_1}\omega_2\omega_3
 -\epsilon\delta_1\omega_1-\frac{\alpha}{I_1}\omega_2 v_1,\\
&
\dot{\omega}_2
=\frac{I_3-I_1}{I_2}\omega_3\omega_1
 -\epsilon\delta_2\omega_2
 +\frac{\alpha\omega_1+\beta_2}{I_2}v_1,\\
&
\dot{\omega}_3
 =\frac{I_1-I_2}{I_3}\omega_1\omega_2
 -\epsilon-\delta_3\omega_3+\frac{\beta_3}{I_3}v_1,\\
&
\dot{\epsilon}=0,\quad
\dot{u}_1=-\nu v_1,\quad
\dot{v}_1=\nu u_1.
\end{split}
\label{eqn:rbody1}
\end{equation}
Since $M_j(c)\neq 0$, $j=1,2$, we apply Theorem~\ref{thm:main}
 to obtain the following result for \eqref{eqn:rbody1}.
 
\begin{prop}
\label{prop:4a}
The system \eqref{eqn:rbody1} is not real meromorphically integrable near
\begin{align*}
&
\{(\omega,\epsilon,u_1,v_1)=(\omega_\pm^\h(t;c),0,0,0)
 \in\Rset^3\times\Rset\times\Rset\times\Rset\mid t\in\Rset\}\\
&
\cup\{(0,\pm,0,0,0,0)\in\Rset^6\}
\end{align*}
and 
\begin{align*}
&
\{(\omega,\epsilon,u_1,v_1)=(\tilde{\omega}_\pm^\h(t;c),0,0,0)
 \in\Rset^3\times\Rset\times\Rset\times\Rset\mid t\in\Rset\}\\
&
\cup\{(0,\pm,0,0,0,0)\in\Rset^6\}
\end{align*}
in $\Rset^6$.
\end{prop}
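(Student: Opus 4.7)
The plan is to invoke Theorem~\ref{thm:main} directly, four times. All of the structural hypotheses (A1)-(A8) with $n=3$, $m=n_\s=n_\u=N=1$, $q=2$, $I=[c_\ell,c_\r]$ have been verified in the paragraphs preceding the proposition, and the Melnikov functions along each of the four heteroclinic orbits $\omega_\pm^\h(t;c)$ and $\tilde{\omega}_\pm^\h(t;c)$ have been computed explicitly in \eqref{eqn:Mpm} and \eqref{eqn:tMpm}. What remains is only to check that each of these four Melnikov functions is nonconstant in $\theta$, and then to apply Theorem~\ref{thm:main} separately for each orbit and re-assemble the conclusion around the appropriate invariant sets in $\Rset^6$.

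First I would record that $M_1(c)>0$ and $M_2(c)>0$ for all $c\in I$: each is a product of strictly positive factors, using $0<I_1<I_2<I_3$, $\nu>0$, and $\sech(\pi\nu/(2kc))>0$. Next, I would read off from \eqref{eqn:Mpm} and \eqref{eqn:tMpm} that each of $M_\pm(\theta;c)$, $\tilde M_\pm(\theta;c)$ has the shape $A\cos\theta+B\sin\theta+C$ with coefficient pair $(A,B)=(\pm\alpha M_1(c),\pm\beta_3 M_2(c))$ (signs varying with the branch). Because $M_1(c),M_2(c)$ never vanish, nonconstancy in $\theta$ is equivalent to $(\alpha,\beta_3)\neq(0,0)$, which is the standing assumption that the time-periodic forcing in \eqref{eqn:rbody} is genuinely present.

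Third, for each of the four heteroclinic orbits I would apply Theorem~\ref{thm:main} to the autonomous enlargement \eqref{eqn:rbody1}. The two applications attached to $\omega_\pm^\h(t;c)$ yield real-meromorphic nonintegrability of \eqref{eqn:rbody1} near the first listed set, while the two applications attached to $\tilde\omega_\pm^\h(t;c)$ yield it near the second listed set. Note that since we work with a union of orbit closures for each sign $c\to\pm c$ of the equilibrium, one must also check that near each equilibrium $(0,\pm c,0)$ the Melnikov analysis is nondegenerate; this is immediate from the positivity of $M_1(c),M_2(c)$ above.

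The main obstacle is essentially bookkeeping rather than any new analytic content: the genuinely nontrivial ingredients, namely the explicit solution $\psi_2(t;c)$ of the adjoint variational equation, the residue-based evaluation of the oscillatory integrals $\int\omega_1\omega_2\sin\nu t\,\d t$ and $\int\omega_3\cos\nu t\,\d t$ built from $\sech(kct)$ and $\tanh(kct)$, and verification of (A7) (which reduces to the fact that each $\D f(0,\pm c,0)$ has exactly one positive and one negative eigenvalue, so only $\dot\omega^\h$ survives as an exponentially decaying solution), have already been carried out in the discussion leading to \eqref{eqn:Mpm}. Granted these inputs, Theorem~\ref{thm:main} closes each of the four cases and the two conclusions of Proposition~\ref{prop:4a} follow.
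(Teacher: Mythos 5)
Your proposal is correct and follows essentially the same route as the paper: the paper's proof is the single observation that $M_1(c),M_2(c)\neq 0$, so the Melnikov functions \eqref{eqn:Mpm} and \eqref{eqn:tMpm} are nonconstant in $\theta$, and Theorem~\ref{thm:main} (whose hypotheses (A1)--(A8) were verified in the preceding discussion) applies to each of the four heteroclinic orbits. Your only addition is to make explicit the requirement $(\alpha,\beta_3)\neq(0,0)$ for nonconstancy, a condition the paper leaves implicit.
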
 
 
\section*{Acknowledgements}
This work was partially supported by the JSPS KAKENHI Grant Number JP23K22409.


\appendix

\renewcommand{\theequation}{\Alph{section}.\arabic{equation}}

\section{Generalized Morales-Ramis theory}
We briefly review the Morales-Ramis theory for the general system \eqref{eqn:gsys}
 in a necessary setting.
See \cite{AZ10,M99,MR01} for more details on the theory.

Consider the general system \eqref{eqn:gsys}.
Let $z=\phi(t)$ be its nonconstant particular solution.
The VE of \eqref{eqn:gsys} along $z=\phi(t;c)$ is given by
\begin{equation}
\dot{\zeta} = \D w(\phi(t))\zeta,\quad \zeta \in \Cset^n.
\label{eqn:gve}
\end{equation}
Let $\C$ be a curve given by $z=\phi(t)$.
We take the meromorphic function field on $\C$
 as the coefficient field $\Kset$ of \eqref{eqn:gve}.
Using arguments given by Morales-Ruiz and Ramis \cite{M99,MR01}
 and Ayoul and Zung \cite{AZ10}, we have the following result.

\begin{thm}
\label{thm:MR}
Let $\G$ be the differential Galois group of \eqref{eqn:gve}.
If the system~\eqref{eqn:gsys} is meromorphically integrable near $\C$,
 then the identity component $\G^0$ of $\G$ is commutative.
\end{thm}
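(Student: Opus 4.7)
The plan is to reduce the statement to the classical Morales-Ramis theorem for Liouville-integrable Hamiltonian systems via the cotangent lift of Ayoul and Zung \cite{AZ10}, which is precisely the device that allows the Morales-Ramis obstruction to be exported from the Hamiltonian setting to the Bogoyavlenskij setting.

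First, I would lift \eqref{eqn:gsys} to the cotangent bundle $T^\ast\Cset^n$ with canonical coordinates $(z,p)$. For each commuting vector field $w_j$ in Definition~\ref{dfn:1a} define the linear-in-$p$ Hamiltonian $H_j(z,p)=p\cdot w_j(z)$, and pull the first integrals $F_k(z)$ back to $T^\ast\Cset^n$. Using $[w_j,w_k]\equiv 0$ and $\D F_k\cdot w_j\equiv 0$ from Definition~\ref{dfn:1a}, one checks directly that the $n$ functions $H_1,\dots,H_q,F_1,\dots,F_{n-q}$ Poisson-commute with respect to the canonical symplectic form and are functionally independent on a dense open set of $T^\ast\Cset^n$. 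Hence the lifted Hamiltonian system
\begin{equation*}
\dot{z}=w(z),\qquad \dot{p}=-\D w(z)^\T p,
\end{equation*}
is meromorphically Liouville-integrable in the sense of the classical theory. If $w,w_j,F_k$ are only meromorphic near $\C$, all of this is carried out on a neighborhood of $\C$.

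Next, the particular solution $z=\phi(t)$ lifts to the particular solution $(\phi(t),0)$ of the Hamiltonian system. Its VE splits as the block-triangular system
\begin{equation*}
\dot{\zeta}=\D w(\phi(t))\zeta,\qquad \dot{\pi}=-\D w(\phi(t))^\T\pi,
\end{equation*}
i.e.\ the original VE \eqref{eqn:gve} together with its adjoint. Applying the classical Morales-Ramis theorem \cite{M99,MR01} to the lifted Hamiltonian system along this lifted solution yields that the identity component $\tilde{\G}^0$ of the differential Galois group $\tilde{\G}$ of the lifted VE is commutative.

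Finally, I would transfer commutativity from $\tilde{\G}^0$ down to $\G^0$. The original VE \eqref{eqn:gve} is obtained from the lifted VE by projecting onto the $\zeta$-component; equivalently, the Picard-Vessiot extension associated with \eqref{eqn:gve} embeds into the one associated with the lifted VE, both taken over the meromorphic function field $\Kset$ on $\C$. Galois correspondence then provides a surjective morphism of linear algebraic groups $\tilde{\G}\twoheadrightarrow\G$, which sends $\tilde{\G}^0$ onto $\G^0$. A quotient of a commutative connected group is commutative, so $\G^0$ is commutative, as required. The main technical obstacle is the careful verification that the lifted Hamiltonians remain functionally independent in a neighborhood of the lifted curve and that the Picard-Vessiot extensions are nested in the way required for the surjection of Galois groups to hold; these are precisely the points addressed in detail by Ayoul and Zung \cite{AZ10}.
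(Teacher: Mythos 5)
Your proposal is correct and is exactly the cotangent-lift argument of Ayoul and Zung that the paper itself invokes: the paper offers no independent proof of Theorem~\ref{thm:MR}, simply deferring to \cite{AZ10,M99,MR01}, and your sketch faithfully reconstructs that route (lift to $H_j=p\cdot w_j$ on $T^*\Cset^n$, block-diagonal VE along $(\phi(t),0)$, classical Morales--Ramis, then surjection of Galois groups onto the $\zeta$-block). No gaps beyond the independence and Picard--Vessiot nesting details you already flag as being handled in \cite{AZ10}.
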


By contraposition of Theorem~\ref{thm:MR}, if $\G^0$ is not commutative,
 then the system~\eqref{eqn:gsys} is meromorphically nonintegrable near $\C$.


\end{document}